\newtheorem{thm}{Theorem}[section]
\newtheorem{cor}[thm]{Corollary}
\newtheorem{lemma}[thm]{Lemma}
\newtheorem{prob}[thm]{Problem}
\newtheorem{example}[thm]{Example}
\theoremstyle{definition}
\newtheorem{definition}{Definition}
\newtheorem{rem}[definition]{Remark}
\numberwithin{equation}{section}
\begin{document}

\title{Some remarks on orthogonally additive operators on Riesz spaces}

\author{Olena Fotiy}
\address{Department of Mathematics and Informatics\\
Chernivtsi National University (Ukraine)}

\email{ofotiy@ukr.net}

\author{Vladimir Kadets}

\address{Department of Mathematics Holon Institute of Technology (Israel) and V.N.Karazin Kharkiv National University (Ukraine) }

\email{vova1kadets@yahoo.com}

\author{Mikhail Popov}

\address{Institute of Exact and Technical Sciences,
Pomeranian University in S{\l}upsk, S{\l}upsk (Poland) and
Vasyl Stefanyk Precarpathian National University, Ivano-Frankivsk (Ukraine)}

\email{misham.popov@gmail.com}

\keywords{Riesz space; Orthogonally additive operator; Disjointness preserving operator}

\subjclass[2010]{Primary 47H30; Secondary 47B38; 47B65}

\begin{abstract}
We study orthogonally additive operators between Riesz spaces without the Dedekind completeness assumption on the range space. Our first result gives necessary and sufficient conditions on a pair of Riesz spaces $(E,F)$ for which every orthogonally additive operator from $E$ to $F$ is laterally-to-order bounded. Second result provides sufficient conditions on a pair of orthogonally additive operators $S$ and $T$ to have $S \vee T$, as well as to have $S \wedge T$, and consequently, for an orthogonally additive operator $T$ to have $T^+$, $T^-$ or $|T|$ without any assumption on the domain and range spaces. Finally we prove an analogue of Meyer's theorem on the existence of modules of disjointness preserving operator for the setting of orthogonally additive operators.
\end{abstract}

\maketitle

\section{Introduction}

Orthogonally additive operators (OAOs in short) between Riesz spaces generalize linear operators. Nonlinear OAOs naturally appeared in different fields of analysis, including Uryson and Nemytsky integral operators. Fortunately, lots of tools applied for linear operators, which use the additivity only on disjoint vectors, maintain applicability to OAOs (see \cite{PPSurv} for a recent survey on OAOs). For familiarly used notions and facts on Riesz spaces we refer the reader to \cite{ABu}.

Let $E,F$ be Riesz spaces. A function $T \colon E \to F$ is called an \emph{OAO} provided $T(x + y) = T(x) + T(y)$ for all disjoint vectors $x,y \in E$. An OAO $T \colon E \to F$ is said to be
\begin{itemize}
  \item \emph{positive}\index{posOAO} (write $T \ge 0$) provided $T(x) \ge 0$ for all $x \in E$;
  \item \emph{order bounded}\index{ordboundOAO} or an \emph{abstract Uryson operator} provided $T$ sends order bounded subsets of $E$ to order bounded subsets of $F$.
\end{itemize}
The symbols $\mathcal{O}(E,F)$, $\mathcal{O}^+(E,F)$ and $\mathcal{U}(E,F)$ stand to denote the sets of all OAOs, the set of all positive OAOs and the set of all abstract Uryson operators respectively.

Obviously, $\mathcal{O}(E,F)$ is an ordered vector space with respect to the order $S \le T$ provided $T - S \ge 0$, that is, $S(x) \le T(x)$ for all $x \in E$. Remark that the standard order on the vector space $\mathcal{L}(E,F)$ of all linear operators from $E$ to $F$, which is a linear subspace of $\mathcal{O}(E,F)$, is different, and the only linear operator which is positive as an OAO is zero.

Let $E$ be a Riesz space. The notation $x = y \sqcup z$ for $x,y,z \in E$ means that $x = y + z$ and $y \perp z$. An element $x \in E$ is called a \emph{fragment}\footnote{component in the terminology of \cite{ABu}} of $y \in E$ provided $x \perp y-x$. In this case we write $x \pmb{\sqsubseteq} y$. The binary relation $\pmb{\sqsubseteq}$ is a (non-strict) partial order on $E$, called the lateral order. The set of all fragments of a given element $e \in E$ will be denoted by $\mathfrak{F}_e$. The set $\mathfrak{F}_e$ is a Boolean algebra with zero $0$, unit $e$ with respect to the operations $\pmb{\cup}$ and $\pmb{\cap}$ of taking the lateral supremum and infimum respectively. Moreover, $x \pmb{\cup} y = (x^+ \vee y^+) - (x^- \vee y^-)$ and $x \pmb{\cap} y = (x^+ \wedge y^+) - (x^- \wedge y^-)$ for all $x,y \in \mathfrak{F}_e$ (see \cite{MPP} for details).

Let $E,F$ be Riesz spaces. An OAO $T \colon E \to F$ is said to be \emph{laterally-to-order bounded} provided for every $e \in E$ the image $T(\mathfrak{F}_e)$ under $T$ of the set $\mathfrak{F}_e$ of all fragments of $e$ is order bounded in $F$. The set of laterally-to-order bounded OAOs from $E$ to $F$ is denoted by $\mathcal{P}(E,F)$. The notion of laterally-to-order bounded OAOs was introduced and first studied by Pliev and Ramdane in \cite{PlRa}.

Obviously, $\mathcal{U}(E,F) \subseteq \mathcal{P}(E,F) \subseteq \mathcal{O}(E,F)$.

\section{Laterally-to-order bounded OAOs}

The first study of the Riesz space structure of the ordered vector space $\mathcal{O}(E,F)$ of all OAOs between Riesz spaces $E$ and $F$ was presented by Maz\'{o}n and Segura de Le\'{o}n in \cite{Maz-1} and \cite{Maz-2}(1990). One of the main structural results in \cite{Maz-1} asserts that, if the Riesz space $F$ is Dedekind complete then $\mathcal{U}(E,F)$ is a Dedekind complete Riesz space as well, and some natural formulas for lattice operations on $\mathcal{U}(E,F)$ were provided. In 2018 Pliev and Ramdane \cite{PlRa} generalized the above mentioned result of Maz\'{o}n and Segura de Le\'{o}n to the class $\mathcal{P}(E,F)$ of laterally-to-order bounded OAOs; see the next theorem.

\begin{thm}[\cite{PlRa}] \label{th:PlRamd}
Let $E,F$ be Riesz spaces with $F$ Dedekind complete. Then $\mathcal{P}(E,F)$ is a Dedekind complete Riesz space and the following assertions hold.
\begin{enumerate}
  \item For every $S, T \in \mathcal{P}(E,F)$ and every $x \in E$ one has
\begin{enumerate}
  \item $(S \vee T)(x) = \sup \{S(u) + T(v): \, x = u \sqcup v\}$;
  \item $(S \wedge T)(x) \, = \, \inf \{S(u) + T(v): \, x = u \sqcup v\}$;
  \item $T^+(x) = \sup \{T(u) : \, u \sqsubseteq x\}$;
  \item $T^-(x) = - \inf \{T(u) : \, u \sqsubseteq x\}$;
  \item $|T(x)| \le |T|(x)$.
\end{enumerate}
  \item The set $\mathcal{U}(E,F)$ is an order ideal of $\mathcal{P}(E,F)$ and hence itself is a Dedekind complete Riesz space with properties {\rm (a)-(e)}.
\end{enumerate}
\end{thm}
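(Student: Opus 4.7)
I would verify formulas (a)--(e) by constructing the candidate lattice operations directly from the displayed formulas, then derive Dedekind completeness of $\mathcal{P}(E,F)$ by taking pointwise suprema of upward directed families, whose existence in $F$ is forced by Dedekind completeness of $F$. For $S, T \in \mathcal{P}(E,F)$ and $x \in E$, every decomposition $x = u \sqcup v$ satisfies $u, v \pmb{\sqsubseteq} x$, so $\{S(u)+T(v) : x = u \sqcup v\} \subseteq S(\mathfrak{F}_x) + T(\mathfrak{F}_x)$ is order bounded in $F$; Dedekind completeness then yields the supremum and infimum defining tentative operators $R_\vee(x)$ and $R_\wedge(x)$.

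\textbf{Orthogonal additivity---the main obstacle.} The crucial step is showing $R_\vee(x+y) = R_\vee(x) + R_\vee(y)$ whenever $x \perp y$. The structural lemma I would invoke (and which would need a careful short proof if not cited) is that every fragment $z \pmb{\sqsubseteq} x+y$ splits uniquely as $z = z_1 + z_2$ with $z_1 \pmb{\sqsubseteq} x$, $z_2 \pmb{\sqsubseteq} y$, via $z_1 = z \pmb{\cap} x$ and $z_2 = z \pmb{\cap} y$ inside the Boolean algebra $\mathfrak{F}_{x+y}$, using the explicit formulas recalled in the introduction. This sets up a bijection between decompositions $x+y = u \sqcup v$ and pairs of decompositions $x = u_1 \sqcup v_1$, $y = u_2 \sqcup v_2$ via $u = u_1 + u_2$, $v = v_1 + v_2$. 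Because $u_1 \perp u_2$ and $v_1 \perp v_2$, orthogonal additivity of $S$ and $T$ yields $S(u)+T(v) = [S(u_1)+T(v_1)] + [S(u_2)+T(v_2)]$, and taking suprema independently over the two summands gives the required additivity. The analogous argument handles $R_\wedge$.

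\textbf{Supremum property, lateral-to-order boundedness, and (c)--(e).} Taking $(u,v)=(x,0)$ or $(0,x)$ shows $R_\vee \ge S,T$. Conversely, for any $R' \in \mathcal{O}(E,F)$ with $R' \ge S,T$, orthogonal additivity gives $R'(x) = R'(u)+R'(v) \ge S(u)+T(v)$ for each $x = u \sqcup v$, so $R' \ge R_\vee$; hence $R_\vee = S \vee T$, and dually $R_\wedge = S \wedge T$. For $w \pmb{\sqsubseteq} e$, the value $R_\vee(w)$ is sandwiched between $S(w) \vee T(w)$ and an upper bound of $S(\mathfrak{F}_e)+T(\mathfrak{F}_e)$, which gives lateral-to-order boundedness. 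Formulas (c) and (d) follow as specializations of (a) and (b) to $(T,0)$ and $(0,T)$, since writing $x = u \sqcup v$ is the same as requiring $u \pmb{\sqsubseteq} x$; and (e) comes from $|T(x)| = |T^+(x)-T^-(x)| \le T^+(x)+T^-(x) = |T|(x)$.

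\textbf{Dedekind completeness and the ideal property.} Given an upward directed order bounded family $(T_\alpha) \subset \mathcal{P}(E,F)$, the pointwise supremum $T(x) := \sup_\alpha T_\alpha(x)$ exists in $F$; directedness yields $\sup_\alpha (T_\alpha(x)+T_\alpha(y)) = \sup_\alpha T_\alpha(x) + \sup_\alpha T_\alpha(y)$, giving orthogonal additivity, and for $w \pmb{\sqsubseteq} e$ the value $T(w)$ is trapped between $T_{\alpha_0}(w)$ and any fixed upper bound, so $T \in \mathcal{P}(E,F)$. Finally, to see that $\mathcal{U}(E,F)$ is a solid subspace of $\mathcal{P}(E,F)$, observe that every fragment $u$ of $x$ satisfies $|u| \le |x|$ (from $u^+ \le x^+$, $u^- \le x^-$), so formulas (c)--(d) imply $|T| \in \mathcal{U}(E,F)$ whenever $T \in \mathcal{U}(E,F)$; if $|S| \le |T|$ in $\mathcal{P}(E,F)$, then (e) gives $|S(x)| \le |T|(x)$ pointwise, and hence $S$ sends order bounded sets to order bounded sets as well. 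I expect the lateral-splitting lemma in Step~2 to be the only genuinely delicate step; everything else is routine once that bijection is in hand.
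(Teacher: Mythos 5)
Your proposal is correct, and its core coincides with the route the paper itself indicates: the statement is quoted from \cite{PlRa}, and the paper's own path to it is the remark that Theorem~\ref{th:veekfuy} and its corollaries yield it once $F$ is Dedekind complete (so that the relevant suprema automatically exist for $S,T \in \mathcal{P}(E,F)$). In that proof the orthogonal additivity of $x \mapsto \sup\{S(u)+T(v): x = u \sqcup v\}$ is obtained from Pliev's Lemma~\ref{le:Pli}, and your ``lateral splitting'' of a fragment $z \pmb{\sqsubseteq} x+y$ as $(z \pmb{\cap} x) \sqcup (z \pmb{\cap} y)$ is exactly the $2\times 2$ case of that lemma; your verification of the least-upper-bound property, of lateral-to-order boundedness, and the derivation of (c)--(e) by specialization are likewise the same as in Theorem~\ref{th:veekfuy} and Corollaries~\ref{cor:hfhn1}--\ref{cor:hfhn4}. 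What you add beyond what this paper proves is the Dedekind completeness of $\mathcal{P}(E,F)$ and the ideal property of $\mathcal{U}(E,F)$, which the paper leaves entirely to the citation \cite{PlRa}; your pointwise-supremum argument for upward directed order bounded families and the solidity argument via (c)--(e) are sound, modulo two routine points worth stating explicitly: an arbitrary bounded-above set may be replaced by the upward directed set of its finite suprema (available because $\mathcal{P}(E,F)$ has just been shown to be a Riesz space), and an order ideal of a Dedekind complete Riesz space is itself Dedekind complete, which is what turns the ideal property into assertion (2).
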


It is interesting to note that formulas (a) and (b) coincide with Abramovich's formulas for join and meet of linear operators, which are true under the additional assumption on $E$ to have the principal projection property and false without this assumption \cite[Theorem\,1.50]{ABu}.

So the Riesz space $\mathcal{P}(E,F)$ is a much more natural object for the study of OAOs than $\mathcal{U}(E,F)$. Remark that $\mathcal{U}(E,F)$ need not be a band of $\mathcal{P}(E,F)$ \cite{PlRa}.

The linear subspace $\mathcal{P}(E,F)$ of $\mathcal{O}(E,F)$ is so ``large'' that these two ordered vector spaces have common positive cone, that is, every positive OAO is laterally-to-order bounded, which is easy to see. However, the inclusion $\mathcal{P}(E,F) \subset \mathcal{O}(E,F)$ can be strict due to the following example.

\begin{example}[\cite{PlRa}, Example\,3.4] \label{ex:plram}
Let $\ell_0^\infty$ be the Riesz space of all real eventually constant sequences $x = (x_n)_{n=1}^\infty$, that is, $(\exists k \in \mathbb N)(\forall n \ge k) \, x_n = x_k$ with the coordinate-wise order. Then the map $T \colon E \to \mathbb R$ defined by setting $T(x) = \sum_{n=1}^\infty \frac{(-1)^n |x_n|}{n}$ for an arbitrary $x \in E$, belongs to $\mathcal{O}(E,\mathbb R) \setminus \mathcal{P}(E,\mathbb R)$.
\end{example}

Observe, that the Riesz space $\ell_0^\infty$ is not Dedekind complete, and the idea used in Example\,\ref{ex:plram} cannot help to construct an example of the kind with Dedekind complete domain space.

The following result characterizes pairs of Riesz spaces $(E,F)$ for which the inclusion $\mathcal{P}(E,F) \subset \mathcal{O}(E,F)$ is strict.

\begin{thm} \label{th:knbdbj}
Let $E,F$ be Riesz spaces with $F$ Archimedean. Then the following assertions are equivalent.
\begin{enumerate}
  \item $\mathcal{P}(E,F) = \mathcal{O}(E,F)$.
  \item Either $F = \{0\}$ or for every $e \in E$ the set $\mathfrak{F}_e$ is finite.
\end{enumerate}
\end{thm}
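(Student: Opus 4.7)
My plan is to prove both directions; $(2)\Rightarrow(1)$ is essentially immediate, so the real work is in $(1)\Rightarrow(2)$. For $(2)\Rightarrow(1)$: if $F=\{0\}$ there is nothing to check; otherwise every $\mathfrak{F}_e$ is finite, so for any $T\in\mathcal O(E,F)$ the image $T(\mathfrak{F}_e)$ is a finite subset of $F$, hence order bounded by the finite meet and join of its elements.

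For $(1)\Rightarrow(2)$ I argue the contrapositive: assume $F\neq\{0\}$ and that some $e\in E$ has $\mathfrak{F}_e$ infinite, and build $T\in\mathcal O(E,F)\setminus\mathcal P(E,F)$. The infinite Boolean algebra $\mathfrak{F}_e$ contains an infinite antichain $\{b_n\}_{n\ge1}$ of nonzero fragments by a standard argument (if there are infinitely many atoms, take them; otherwise the atomless part is infinite, and in it one iteratively splits any nonzero element into two nonzero disjoint pieces). Because the $b_n$ are pairwise disjoint in $\mathfrak{F}_e$, they are actually disjoint in $E$: for $x,y\in\mathfrak{F}_e$ one has $x^+\le e^+$ and $y^-\le e^-$, so $x^+\wedge y^-\le e^+\wedge e^-=0$ and similarly $x^-\wedge y^+=0$, which combined with $x^+\wedge y^+=x^-\wedge y^-=0$ (from $x\pmb{\cap}y=0$) gives $|x|\wedge|y|=0$. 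Consequently $\{b_n\}$ is linearly independent in the vector space $E$: from a relation $\sum a_i b_i=0$, disjointness of the $b_i$ gives $|\sum a_i b_i|=\sum|a_i||b_i|=0$, and a sum of pairwise disjoint positive elements can vanish only if each term is zero, forcing $a_i=0$.

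Now extend $\{b_n\}$ to a Hamel basis $\mathcal B$ of $E$ and let $\Phi\colon E\to\mathbb R$ be the unique linear functional with $\Phi(b_n)=(-1)^n/n$ and $\Phi(\beta)=0$ for all other $\beta\in\mathcal B$. Pick $f_0\in F$ with $f_0>0$ (available since $|g|>0$ for any nonzero $g\in F$) and define $T\colon E\to F$ by $T(x)=\Phi(x)f_0$. Being linear, $T$ is in particular an OAO. For any finite $A\subset\mathbb N$, the sum $\sum_{i\in A}b_i$ coincides with the lateral join $\pmb{\cup}_{i\in A}b_i$ and hence lies in $\mathfrak{F}_e$, and
\[
T\Bigl(\sum_{i\in A}b_i\Bigr)=\Bigl(\sum_{i\in A}\frac{(-1)^i}{i}\Bigr)f_0.
\]
Taking $A_N=\{2,4,\ldots,2N\}$ yields $T\bigl(\sum_{i\in A_N}b_i\bigr)=\tfrac12 H_N f_0$, where $H_N$ is the $N$-th harmonic number. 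If some $g\in F$ satisfied $T(c)\le g$ for every $c\in\mathfrak{F}_e$, then $n f_0\le g$ for all $n\in\mathbb N$, contradicting the Archimedean property of $F$ (since $f_0>0$). Therefore $T\notin\mathcal P(E,F)$.

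The whole argument rests on two basic facts: the existence of a countable antichain in any infinite Boolean algebra, and the linear independence of pairwise disjoint nonzero elements of a Riesz space. The potentially awkward point is the absence of any Dedekind completeness or projection hypothesis on $E$; I sidestep it by producing $T$ as a Hamel-basis linear functional scaled by a fixed positive element of $F$, so that orthogonal additivity of $T$ is automatic from its linearity and all the order-theoretic content is concentrated in the single Archimedean step in $F$ at the end.
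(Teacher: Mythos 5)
Your proof is correct and takes essentially the same route as the paper: argue the contrapositive by extracting an infinite disjoint sequence of nonzero fragments of $e$, note it is linearly independent, extend to a Hamel basis, define a linear operator whose values on $\mathfrak{F}_e$ grow without bound, and invoke the Archimedean property of $F$. The only difference is cosmetic: the paper simply sets $Te_n = nf$ on the basis, so your harmonic-sum functional is an unnecessary detour, while your explicit justifications of the infinite antichain and of the linear independence of pairwise disjoint nonzero vectors fill in steps the paper merely asserts.
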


\begin{proof}
(1) $\Rightarrow$ (2). Assuming the contrary, let $f \in F \setminus \{0\}$ and let $e$ be an element of $E$ with infinite $\mathfrak{F}_e$. Our goal is to construct an OAO $T \colon E \to F$ which is not laterally-to-order bounded (it is going to be even linear). Since $\mathfrak{F}_e$ is an infinite Boolean algebra, we can choose a disjoint sequence $(e_n)_{n=1}^\infty$ of nonzero elements of $\mathfrak{F}_e$, which is linearly independent in $E$. Extend $(e_n)_{n=1}^\infty$ to a Hamel basis $(e_i)_{i \in I}$, $\mathbb N \subseteq I$ in the linear space $E$. Then define a linear operator $T \colon E \to F$ using the Hamel basis as follows. First define $Te_n = n f$ for all $n \in \mathbb N$ and somehow define $T$ on the rest of the basis, say, $Te_i = 0$ for $i \in I \setminus \mathbb N$. Finally we extend $T$ from the basis to the entire $E$ by linearity. It remains to observe that $T(\mathfrak{F}_e)$ is not order bounded in $F$, because $nf \in T(\mathfrak{F}_e)$ for all $n \in \mathbb N$ and $F$ is Archimedean.

(2) $\Rightarrow$ (1) is obvious.
\end{proof}

So the inclusion $\mathcal{P}(E,F) \subset \mathcal{O}(E,F)$ is strict, except for somewhat trivial cases.

\begin{rem}
The infinite dimensional Dedekind complete Riesz space $c_{00}$ of all eventually zero sequences possesses the second part of (2) in Theorem\,\ref{th:knbdbj}, and so $\mathcal{P}(c_{00},F) = \mathcal{O}(c_{00},F)$ for every Archimedean Riesz space $F$.
\end{rem}

\section{Existence of a supremum $S \vee T$ for OAOs $S$ and $T$}

In this section, we generalize Pliev and Ramdane's Theorem\,\ref{th:PlRamd} and find sufficient condition on OAOs $S$ and $T$ to have $S \vee T$ as well as to have $S \wedge T$, and consequently, for an OAO $T$ to have $T^+$, $T^-$ or $|T|$ without any assumption on the domain and range spaces.

We need the following Pliev Lemma \cite[Proposition\,3.11]{Pl}, which is a lateral analogue of the Riesz decomposition property \cite[Theorem\,1.20]{ABu}.

\begin{lemma}[\cite{Pl}] \label{le:Pli}
Let $E$ be a Riesz space, $u_1, \ldots, u_m, v_1, \ldots v_n \in E$ and $e := \bigsqcup_{i=1}^m u_i = \bigsqcup_{k=1}^n v_k$. Then there exists a (disjoint) double sequence ${{(w_{i,k})}_{i=1}^m}_{k=1}^n$ in $E$ such that
\begin{enumerate}
  \item $u_i = \bigsqcup_{k=1}^n w_{i,k}$ for any $i \in \{1, \ldots, m\}$;
  \item $v_k = \bigsqcup_{i=1}^m w_{i,k}$ for any  $k \in \{1, \ldots, n\}$.
\end{enumerate}
\end{lemma}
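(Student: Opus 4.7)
The plan is to work entirely inside the Boolean algebra $\mathfrak{F}_e$ of fragments of $e$, using the lateral operations $\pmb{\cup}$ and $\pmb{\cap}$ recalled in the Introduction. Since the hypotheses give $u_i, v_k \in \mathfrak{F}_e$ for all $i,k$, I would simply set
\begin{equation*}
w_{i,k} \;:=\; u_i \pmb{\cap} v_k \qquad (1 \le i \le m,\ 1 \le k \le n),
\end{equation*}
and then verify that this double sequence has the two required properties. The guiding idea is that $(u_i)_{i=1}^m$ and $(v_k)_{k=1}^n$ are two partitions of the unit $e$ in the Boolean algebra $\mathfrak{F}_e$, so their common refinement, indexed by pairs $(i,k)$, is produced by meeting them pointwise, exactly as one does in any Boolean algebra.

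For the disjointness of the family $(w_{i,k})$, I would observe first that $w_{i,k} \pmb{\sqsubseteq} u_i$ and $w_{i,k} \pmb{\sqsubseteq} v_k$, because the Boolean meet lies below both of its arguments. Hence if $(i,k) \ne (i',k')$ and $i \ne i'$, then $w_{i,k}$ and $w_{i',k'}$ are fragments of the disjoint elements $u_i$ and $u_{i'}$, so $w_{i,k} \perp w_{i',k'}$; the case $k \ne k'$ is symmetric, using $v_k \perp v_{k'}$.

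For the decomposition identities it is enough to show $u_i = \bigsqcup_{k=1}^n w_{i,k}$, as the other identity follows by swapping roles. Here I plan to invoke two facts about $\mathfrak{F}_e$: (a) the distributive law $u_i \pmb{\cap}\bigl(v_1 \pmb{\cup}\cdots\pmb{\cup} v_n\bigr) = (u_i \pmb{\cap} v_1)\pmb{\cup}\cdots\pmb{\cup}(u_i \pmb{\cap} v_n)$, valid in any Boolean algebra; and (b) on pairwise disjoint fragments the lateral join $\pmb{\cup}$ coincides with the ordinary sum $\sqcup$, which follows from the formula $x \pmb{\cup} y = (x^+ \vee y^+) - (x^- \vee y^-)$ by noting that $x \perp y$ forces $x^\pm \perp y^\pm$ and hence $x^+ \vee y^+ = x^+ + y^+$ and $x^- \vee y^- = x^- + y^-$. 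Combining (a), (b) with the hypothesis $\bigsqcup_k v_k = e$ and the identity $u_i \pmb{\cap} e = u_i$ (meet with the unit of $\mathfrak{F}_e$) yields $\bigsqcup_k w_{i,k} = u_i$.

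The only place where care is needed is in justifying (b) and the distributive law when the $u_i, v_k$ are allowed to be non-positive fragments of $e$; this is routine bookkeeping with positive and negative parts via the given formulas for $\pmb{\cup}$ and $\pmb{\cap}$, and I do not expect a genuine obstacle beyond that. The construction $w_{i,k} = u_i \pmb{\cap} v_k$ is canonical, and once the Boolean algebra framework is in place the entire argument reduces to transporting the classical double-partition argument from the theory of Boolean algebras into the lateral setting.
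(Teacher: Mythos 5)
Your proposal is correct and follows essentially the same route as the paper, whose sketch of proof likewise defines $w_{i,k} := u_i \pmb{\cap} v_k$ in the Boolean algebra $\mathfrak{F}_e$ and notes that these elements have the desired properties; you merely spell out the verification (disjointness via fragments of disjoint elements, the decomposition via distributivity and the fact that the lateral join of disjoint fragments is their sum) that the paper leaves implicit.
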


\begin{proof}[Sketch of proof of Lemma\,\ref{le:Pli}]
Since $u_i, v_k$ are elements of the Boolean algebra $\mathfrak{F}_e$, the elements $w_{i,k} := u_i \pmb{\cap} v_k$ are well defined and possess the desired properties.
\end{proof}

\begin{thm} \label{th:veekfuy}
Let $E, F$ be Riesz spaces and $S,T \in \mathcal{O}(E,F)$. If for every $x \in E$ the supremum $R(x) := \sup \{S(u) + T(v): \, x = u \sqcup v\}$ exists in $F$ then $S \vee T$ exists in $\mathcal{O}(E,F)$ and $S \vee T = R$. Moreover, in case of existence the following hold.
\begin{enumerate}
  \item If $S,T \in \mathcal{P}(E,F)$ then $S \vee T \in \mathcal{P}(E,F)$.
  \item If $S,T \in \mathcal{U}(E,F)$ then $S \vee T \in \mathcal{U}(E,F)$.
\end{enumerate}
\end{thm}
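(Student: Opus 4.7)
The plan is to verify in turn that $R$ is orthogonally additive, that $R$ dominates both $S$ and $T$, and that $R$ is dominated by every OAO majorizing $\{S,T\}$; properties (1) and (2) will then fall out of routine size estimates.

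For orthogonal additivity of $R$, let $x,y \in E$ with $x \perp y$. To prove $R(x+y) \le R(x)+R(y)$, I take an arbitrary decomposition $x+y = u \sqcup v$ and apply Pliev's Lemma~\ref{le:Pli} to the double identity $x+y = x \sqcup y = u \sqcup v$, obtaining $w_{i,k}$ with $x = w_{1,1} \sqcup w_{1,2}$, $y = w_{2,1} \sqcup w_{2,2}$, $u = w_{1,1} \sqcup w_{2,1}$ and $v = w_{1,2} \sqcup w_{2,2}$. Orthogonal additivity of $S$ and $T$ gives $S(u)+T(v) = [S(w_{1,1})+T(w_{1,2})] + [S(w_{2,1})+T(w_{2,2})] \le R(x)+R(y)$, and taking the supremum over $(u,v)$ yields the claim. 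For the reverse inequality, given decompositions $x = u_1 \sqcup v_1$ and $y = u_2 \sqcup v_2$, the four fragments are pairwise disjoint (the four mixed pairs, because $|u_j|,|v_j| \le |x|$ for $j=1$, resp.\ $\le |y|$ for $j=2$, and $x \perp y$), so $x+y = (u_1+u_2) \sqcup (v_1+v_2)$ is a genuine lateral decomposition; orthogonal additivity of $S,T$ then yields $[S(u_1)+T(v_1)] + [S(u_2)+T(v_2)] \le R(x+y)$, and passing to suprema in $(u_1,v_1)$ and $(u_2,v_2)$ independently gives $R(x)+R(y) \le R(x+y)$.

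The supremum property is now easy. Taking $u=x,v=0$ (and $u=0,v=x$) in the defining set for $R(x)$, together with $S(0)=T(0)=0$, gives $R \ge S$ and $R \ge T$. Conversely, if $Q \in \mathcal{O}(E,F)$ satisfies $Q \ge S$ and $Q \ge T$, then for any $x = u \sqcup v$ orthogonal additivity of $Q$ gives $Q(x) = Q(u)+Q(v) \ge S(u)+T(v)$, and taking the supremum gives $Q \ge R$; hence $S \vee T = R$ in $\mathcal{O}(E,F)$.

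For (1), fix $e \in E$ and $x \in \mathfrak{F}_e$; every decomposition $x = u \sqcup v$ has $u,v \in \mathfrak{F}_x \subseteq \mathfrak{F}_e$, so $S(u)+T(v)$ lies in the order bounded set $S(\mathfrak{F}_e)+T(\mathfrak{F}_e)$, yielding a uniform upper bound for $R$ on $\mathfrak{F}_e$; a uniform lower bound is obtained from $R(x) \ge S(x)$ and the order boundedness of $S(\mathfrak{F}_e)$. For (2), if $A \subseteq [y,z]$ is order bounded then every $x \in A$ satisfies $|x| \le |y| \vee |z|$, so every fragment of every element of $A$ lies in the single order interval $[-(|y|\vee|z|),\,|y|\vee|z|]$; since $S,T \in \mathcal{U}(E,F)$, the set of all $S(u)+T(v)$ over such fragments is order bounded, giving order boundedness of $R(A)$. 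The main obstacle is the inequality $R(x+y) \le R(x)+R(y)$, which depends essentially on the combinatorial splitting provided by Pliev's Lemma; everything else amounts to checking that concatenating compatible lateral decompositions is again lateral.
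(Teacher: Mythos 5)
Your proposal is correct and follows essentially the same route as the paper: Pliev's Lemma \ref{le:Pli} for the inequality $R(x+y)\le R(x)+R(y)$, the concatenation of lateral decompositions for the reverse inequality, and the standard least-upper-bound verification, with items (1) and (2) checked by the routine boundedness estimates the paper leaves implicit. No gaps.
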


\begin{proof}
Show that $R$ is an OAO. Fix any $x,y \in E$ with $x \perp y$. Let $x + y = u \sqcup v$ be any decomposition into disjoint fragments. Choose by Lemma\,\ref{le:Pli} disjoint vectors $w_i$, $i = 1, \ldots, 4$ so that $x = w_1 \sqcup w_3$, $y = w_2 \sqcup w_4$, $u = w_1 \sqcup w_2$ and $v = w_3 \sqcup w_4$. Then
$$
S(u) + T(v) = S(w_1) + S(w_2) + T(w_3) + T(w_4) \le R(x) + R(y).
$$
By the arbitrariness of a decomposition of $x + y$, we obtain $R(x+y) \le R(x) + R(y)$. On the other hand,
\begin{align*}
R(x) + R(y) &= \sup \{S(u) + T(v): \, x = u \sqcup v\} + \sup \{S(w) + T(z): \, y = w \sqcup z\} \\ &= \sup \{S(u+w) + T(v+z): \, x = u \sqcup v, \, y = w \sqcup z\} \\
&\le R(x+y).
\end{align*}
Thus, $R$ is an OAO. Show that $R = S \vee T$ in $\mathcal{O}(E,F)$. Indeed, obviously $S \le R$ and $T \le R$. Let $P \in \mathcal{O}(E,F)$ satisfy $S \le P$ and $T \le P$. Fix any $x \in E$ and consider any decomposition $x = u \sqcup v$. Then $S(u) + T(v) \le P(u) + T(v) = P(x)$. By the arbitrariness of the decomposition of $x$, $R(x) \le P(x)$ and so $R \le P$. Thus, $R = S \vee T$.

Items (1) and (2) easily follow from the obtained formula for $S \vee T$.
\end{proof}

Theorem\,\ref{th:veekfuy} gives the following consequences for the existence of meet of two OAOs, positive and negative parts and modulus of an OAO.

\begin{cor} \label{cor:hfhn1}
Let $E, F$ be Riesz spaces and $S,T \in \mathcal{O}(E,F)$. If for every $x \in E$ the infimum $R(x) := \inf \{S(u) + T(v): \, x = u \sqcup v\}$ exists in $F$ then $S \wedge T$ exists in $\mathcal{O}(E,F)$ and $S \wedge T = R$. Moreover, in case of existence the following hold.
\begin{enumerate}
  \item If $S,T \in \mathcal{P}(E,F)$ then $S \wedge T \in \mathcal{P}(E,F)$.
  \item If $S,T \in \mathcal{U}(E,F)$ then $S \wedge T \in \mathcal{U}(E,F)$.
\end{enumerate}
\end{cor}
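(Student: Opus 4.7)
The plan is to obtain Corollary \ref{cor:hfhn1} as a direct consequence of Theorem \ref{th:veekfuy} via the order-reversal identity $S \wedge T = -\bigl((-S) \vee (-T)\bigr)$, which holds in any ordered vector space, in particular in $\mathcal{O}(E,F)$. No new computation with Pliev's lemma or disjoint decompositions is needed; the work has already been done in the proof of Theorem \ref{th:veekfuy}.

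First I would record the identity
\[
\sup\{(-S)(u) + (-T)(v) : x = u \sqcup v\} \;=\; -\inf\{S(u) + T(v) : x = u \sqcup v\} \;=\; -R(x),
\]
which shows that the hypothesis ``$R(x)$ exists in $F$ for every $x \in E$'' is equivalent to the hypothesis of Theorem \ref{th:veekfuy} applied to the pair $(-S,-T) \in \mathcal{O}(E,F)$. That theorem then yields $(-S)\vee(-T) \in \mathcal{O}(E,F)$ with $(-S)\vee(-T) = -R$. Since negation reverses the order on $\mathcal{O}(E,F)$ (the order being pointwise and $F$ an ordered vector space), this is equivalent to the existence of $S \wedge T$ in $\mathcal{O}(E,F)$, together with the formula $S \wedge T = -\bigl((-S)\vee(-T)\bigr) = R$.

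For the propagation statements (1) and (2), I would use that $\mathcal{P}(E,F)$ and $\mathcal{U}(E,F)$ are linear subspaces of $\mathcal{O}(E,F)$ and hence closed under negation. Thus $S,T \in \mathcal{P}(E,F)$ implies $-S,-T \in \mathcal{P}(E,F)$, and Theorem \ref{th:veekfuy}(1) gives $(-S)\vee(-T) \in \mathcal{P}(E,F)$, whence $S \wedge T = -\bigl((-S)\vee(-T)\bigr) \in \mathcal{P}(E,F)$. The same argument with $\mathcal{U}$ in place of $\mathcal{P}$ yields (2).

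I do not anticipate a real obstacle: the argument is pure order duality, and every nontrivial ingredient (additivity of the candidate on disjoint vectors, the sup-infimality claim, and preservation of $\mathcal{P}$ and $\mathcal{U}$) is already packaged inside Theorem \ref{th:veekfuy}. The only point that must be handled carefully is the interchange of $\sup$ and $\inf$ under negation, which is immediate because $R(x)$ is, by assumption, an order infimum in the ordered vector space $F$.
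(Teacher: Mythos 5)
Your proposal is correct and follows essentially the same route as the paper: the authors also deduce the corollary from Theorem~\ref{th:veekfuy} via the identity $S \wedge T = -\bigl((-S)\vee(-T)\bigr)$ and the interchange of $\sup$ and $\inf$ under negation, with items (1) and (2) read off from the resulting formula. No gaps.
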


\begin{proof}
For every $x \in E$ one has
\begin{align*}
(S \wedge T)(x) = - \bigl( (-S) \vee (-T) \bigr)(x) &= - \sup \{ -S(u) -T(v): \, x = u \sqcup v\} \,\,\,\,\,\,\,\,\,\,\,\,\\
&= \inf \{S(u) + T(v): \, x = u \sqcup v\}.
\end{align*}
Items (1) and (2) follow from the obtained formula for $S \wedge T$.
\end{proof}

Next three corollaries directly follow from Theorem\,\ref{th:veekfuy}.

\begin{cor} \label{cor:hfhn2}
Let $E, F$ be Riesz spaces and $T \in \mathcal{O}(E,F)$. If for every $x \in E$ the supremum $R(x) := \sup \{T(u): \, u \sqsubseteq x\}$ exists in $F$ then $T^+$ exists in $\mathcal{O}(E,F)$ and $T^+ = R$. Moreover, in case of existence the following hold.
\begin{enumerate}
  \item If $T \in \mathcal{P}(E,F)$ then $T^+ \in \mathcal{P}(E,F)$.
  \item If $T \in \mathcal{U}(E,F)$ then $T^+ \in \mathcal{U}(E,F)$.
\end{enumerate}
\end{cor}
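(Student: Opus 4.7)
The plan is to deduce this corollary as a direct specialization of Theorem \ref{th:veekfuy}, applied with one of the two OAOs being the zero operator. Writing $T^+ = T \vee 0$, we want to identify the supremum from Theorem \ref{th:veekfuy} with the supremum $R(x)$ appearing in the statement.

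First I would observe that the zero map $0 \colon E \to F$ is trivially an OAO and in fact lies in $\mathcal{U}(E,F) \subseteq \mathcal{P}(E,F)$. Then for any $x \in E$,
\[
\sup \{T(u) + 0(v) : x = u \sqcup v\} = \sup \{T(u) : x = u \sqcup v \text{ for some } v \in E\}.
\]
The key bookkeeping step is the equivalence: $u \sqsubseteq x$ if and only if $x = u \sqcup (x-u)$, i.e., there exists $v \in E$ (namely $v = x-u$) with $x = u \sqcup v$. Hence the indexing set for the supremum coincides with $\{u : u \sqsubseteq x\}$, and the supremum in Theorem \ref{th:veekfuy} equals $R(x)$ precisely when $R(x)$ exists.

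By Theorem \ref{th:veekfuy} applied to the pair $(T, 0)$, the existence of $R(x)$ for every $x \in E$ therefore guarantees the existence of $T \vee 0 = T^+$ in $\mathcal{O}(E,F)$ together with the formula $T^+ = R$. For item (1), if $T \in \mathcal{P}(E,F)$ then both $T$ and $0$ lie in $\mathcal{P}(E,F)$, so Theorem \ref{th:veekfuy}(1) yields $T^+ \in \mathcal{P}(E,F)$. Item (2) follows identically from Theorem \ref{th:veekfuy}(2) since $0 \in \mathcal{U}(E,F)$.

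There is no real obstacle here; the only thing to be careful about is the elementary set-theoretic identification of decompositions $x = u \sqcup v$ with fragments $u \sqsubseteq x$, which is immediate from the definition of the lateral order.
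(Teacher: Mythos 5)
Your proposal is correct and is exactly the argument the paper intends: the authors state that this corollary follows directly from Theorem~\ref{th:veekfuy}, and the intended route is precisely your specialization to the pair $(T,0)$, using that decompositions $x = u \sqcup v$ correspond to fragments $u \sqsubseteq x$ and that $0 \in \mathcal{U}(E,F) \subseteq \mathcal{P}(E,F)$. No gaps; the bookkeeping identification you single out is indeed the only point that needs mentioning.
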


\begin{cor} \label{cor:hfhn3}
Let $E, F$ be Riesz spaces and $T \in \mathcal{O}(E,F)$. If for every $x \in E$ the infimum $- R(x) := \inf \{T(u): \, u \sqsubseteq x\}$ exists in $F$ then $T^-$ exists in $\mathcal{O}(E,F)$ and $T^- = R$. Moreover, in case of existence the following hold.
\begin{enumerate}
  \item If $T \in \mathcal{P}(E,F)$ then $T^- \in \mathcal{P}(E,F)$.
  \item If $T \in \mathcal{U}(E,F)$ then $T^- \in \mathcal{U}(E,F)$.
\end{enumerate}
\end{cor}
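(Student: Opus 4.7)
The plan is to reduce this corollary to Corollary \ref{cor:hfhn2} applied to $-T$ in place of $T$, using the standard identity $T^- = (-T)^+$ valid in any ordered vector space: from $T = T^+ - T^-$ with $T^+, T^- \geq 0$ one immediately gets $(-T)^+ = T^-$, so $T^-$ exists in $\mathcal{O}(E,F)$ as soon as $(-T)^+$ does, and with the same value.

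First I would rewrite the hypothesis in the form needed to invoke Corollary \ref{cor:hfhn2}. For every $x \in E$,
$$
\sup \{(-T)(u) : \, u \pmb{\sqsubseteq} x\} = - \inf \{T(u) : \, u \pmb{\sqsubseteq} x\} = -(-R(x)) = R(x),
$$
so the assumption that $\inf \{T(u) : u \pmb{\sqsubseteq} x\}$ exists in $F$ is precisely the hypothesis of Corollary \ref{cor:hfhn2} applied to $-T$, and the resulting supremum equals $R(x)$. That corollary then yields that $(-T)^+$ exists in $\mathcal{O}(E,F)$ and equals $R$, whence $T^- = (-T)^+ = R$.

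For items (1) and (2), I would note that both $\mathcal{P}(E,F)$ and $\mathcal{U}(E,F)$ are linear subspaces of $\mathcal{O}(E,F)$; in particular they are stable under multiplication by $-1$, since $(-T)(\mathfrak{F}_e) = -T(\mathfrak{F}_e)$ is order bounded in $F$ if and only if $T(\mathfrak{F}_e)$ is, and similarly for images of order bounded sets. Thus $T \in \mathcal{P}(E,F)$ (respectively $T \in \mathcal{U}(E,F)$) implies $-T \in \mathcal{P}(E,F)$ (respectively $-T \in \mathcal{U}(E,F)$), and the corresponding items of Corollary \ref{cor:hfhn2} applied to $-T$ give $(-T)^+ \in \mathcal{P}(E,F)$ (respectively $\in \mathcal{U}(E,F)$); combined with $T^- = (-T)^+$ this is the claim.

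There is essentially no mathematical obstacle here — the whole argument is a sign-tracking reduction. The only subtlety is the definition of $R$: it is $-R(x)$, not $R(x)$, that is set equal to the infimum, so one must resist the temptation to confuse $R$ with the negative-part formula or to identify $T^-$ with $-(T^+)$ (they are not equal in general). Once the signs are handled, the statement is the formal dual of Corollary \ref{cor:hfhn2}.
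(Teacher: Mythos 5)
Your reduction is correct and follows essentially the same route as the paper, which simply observes that this corollary follows directly from Theorem \ref{th:veekfuy} (equivalently, as you do, from Corollary \ref{cor:hfhn2} applied to $-T$), i.e.\ the same sign-flip argument with the identification $T^- = (-T)^+$. One small remark: the identity $T^- = (-T)^+$ needs no derivation from the decomposition $T = T^+ - T^-$ (which alone would not give it), since $T^-$ is by definition $(-T) \vee 0 = (-T)^+$.
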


\begin{cor} \label{cor:hfhn4}
Let $E, F$ be Riesz spaces and $T \in \mathcal{O}(E,F)$. If for every $x \in E$ the supremum $R(x) := \sup \{T(u) - T(v): \, x = u \sqcup v\}$ exists in $F$ then $|T|$ exists in $\mathcal{O}(E,F)$ and $|T| = R$. Moreover, in case of existence the following hold.
\begin{enumerate}
  \item If $T \in \mathcal{P}(E,F)$ then $|T| \in \mathcal{P}(E,F)$.
  \item If $T \in \mathcal{U}(E,F)$ then $|T| \in \mathcal{U}(E,F)$.
\end{enumerate}
\end{cor}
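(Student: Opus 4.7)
The plan is to deduce this as a direct application of Theorem \ref{th:veekfuy} to the pair $S := T$ and $S' := -T$ in $\mathcal{O}(E,F)$, using the fact that $|T| = T \vee (-T)$ whenever the right-hand join exists in the ordered vector space $\mathcal{O}(E,F)$. With this choice, the hypothesis of Theorem \ref{th:veekfuy} becomes the existence in $F$ of
\[
\sup\{T(u) + (-T)(v) : x = u \sqcup v\} \;=\; \sup\{T(u) - T(v) : x = u \sqcup v\} \;=\; R(x)
\]
for every $x \in E$, which is exactly what is assumed. Theorem \ref{th:veekfuy} then delivers $T \vee (-T) = R$ as an element of $\mathcal{O}(E,F)$, hence $|T| = R$.

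For items (1) and (2) I would use that the classes $\mathcal{P}(E,F)$ and $\mathcal{U}(E,F)$ are closed under negation: if $T(\mathfrak{F}_e)$ is order bounded in $F$ for every $e \in E$, then so is $(-T)(\mathfrak{F}_e) = -T(\mathfrak{F}_e)$, and similarly for images of order bounded subsets. Hence $T \in \mathcal{P}(E,F)$ implies $-T \in \mathcal{P}(E,F)$, and Theorem \ref{th:veekfuy}(1) yields $|T| = T \vee (-T) \in \mathcal{P}(E,F)$; the same reasoning with part (2) of that theorem gives the $\mathcal{U}$-version.

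There is no substantial obstacle: this is a direct specialization of Theorem \ref{th:veekfuy} once one identifies $|T|$ with the join of $T$ and $-T$. The only minor points to verify are the identity $|T| = T \vee (-T)$ in $\mathcal{O}(E,F)$, which follows by the standard argument in any ordered vector space once the join is known to exist, and the stability of $\mathcal{P}(E,F)$ and $\mathcal{U}(E,F)$ under multiplication by $-1$, both of which are immediate.
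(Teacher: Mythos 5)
Your proposal is correct and matches the paper's intent exactly: the paper states that this corollary follows directly from Theorem \ref{th:veekfuy}, and the intended specialization is precisely $|T| = T \vee (-T)$ with the second operator taken to be $-T$, together with the (immediate) stability of $\mathcal{P}(E,F)$ and $\mathcal{U}(E,F)$ under negation. Nothing further is needed.
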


Remark that Theorem\,\ref{th:veekfuy} together with all its corollaries entirely yield Pliev-Ramdane Theorem\,\ref{th:PlRamd}.

\section{The existence of modules of a disjointness preserving OAO}

As usual, $\mathcal L_b (E,F)$ denotes the ordered vector space of all order bounded linear operators between Riesz spaces $E$ and $F$, which is a Riesz space once $F$ is Dedekind complete \cite[Theorem\,1.18]{ABu}. Recall that an OAO $T \colon E \to F$ is said to preserve disjointness provided $T(x) \perp T(y)$ for all $x,y \in E$ with $x \perp y$.

In this section, we show that the following Meyer theorem on the existence of a modulus $|T|$ of a disjointness preserving linear operator $T$ holds true for OAOs.

\begin{thm}[Theorem\,2.40, \cite{ABu}] \label{th:mayers}
Let $E, F$ be Riesz spaces with $F$ Archimedean. Then every disjointness preserving linear operator $T \in \mathcal L_b (E,F)$ has the modulus $|T| := T \vee (-T)$. Moreover, for every $x \in E$
$$
\bigl|T \bigr| |x| = \bigl| T |x| \bigr| = \bigl| Tx \bigr|.
$$
\end{thm}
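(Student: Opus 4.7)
My plan splits the assertion into the pointwise identity $\bigl|Tx\bigr| = \bigl|T|x|\bigr|$ (holding for every $x \in E$) and the construction of a linear operator $|T|$ that serves as the modulus of $T$ in $\mathcal{L}_b(E,F)$.

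For the identity, disjointness preservation applied to $x^+ \perp x^-$ yields $T(x^+) \perp T(x^-)$. Using the elementary Riesz-space fact that $|a - b| = |a + b| = |a| + |b|$ whenever $a \perp b$, I compute
\[
\bigl|Tx\bigr| = \bigl|T(x^+) - T(x^-)\bigr| = |T(x^+)| + |T(x^-)| = \bigl|T(x^+) + T(x^-)\bigr| = \bigl|T|x|\bigr|,
\]
which supplies the second equality in the theorem.

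The natural candidate for the modulus is the map $|T| \colon E \to F$ defined on the positive cone by $|T|(u) := |Tu|$ and extended by $|T|(x) := |T|(x^+) - |T|(x^-)$. For $|T|$ to be a well-defined linear operator, the critical step is additivity on $E^+$:
\[
|T(u+v)| = |Tu| + |Tv| \qquad \text{for all } u, v \in E^+.
\]
This is the main obstacle. It is equivalent to the absence of cancellation between $Tu$ and $Tv$, namely $(Tu)^+ \wedge (Tv)^- = 0 = (Tu)^- \wedge (Tv)^+$. The plan is a localization argument: pass to the principal ideal of $F$ generated by $|Tu| + |Tv|$ and, using the Archimedean hypothesis, represent this ideal as a sublattice of some $C(K)$ via Kakutani--Krein. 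Disjointness preserving linear operators into such $C(K)$ act pointwise as sign-weighted compositions, so $Tu(k)$ and $Tv(k)$ share the sign of the same weight at each $k \in K$, which yields the required identity. I note that Corollary~\ref{cor:hfhn4} (applied with $T$ viewed as an OAO) already yields additivity on \emph{disjoint} positive elements, but not on general $u, v \in E^+$, so the Kakutani step is indispensable.

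Granting additivity, $|T|$ is a positive linear operator with $|T|(|x|) = |Tx|$, which combined with the first display gives the first equality in the theorem. That $|T|$ is indeed the linear modulus of $T$ is then routine: both $|T| + T$ and $|T| - T$ are positive on $E^+$ (since $|Tu| \ge \pm Tu$), and for any positive $R \in \mathcal{L}_b(E, F)$ with $R \geq T$ and $R \geq -T$ one has $Ru \geq |Tu| = |T|(u)$ for $u \in E^+$, hence $R \geq |T|$ in $\mathcal{L}_b(E, F)$. The only genuinely hard step is the additivity lemma above.
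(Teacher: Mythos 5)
First, note that the paper does not prove this statement at all: it is quoted verbatim from \cite{ABu} (Theorem~2.40) as background, so the comparison has to be with the standard proof there, which runs through Lemma~2.39 of \cite{ABu} (quoted in the paper as Lemma~\ref{le:mayers}): for order bounded disjointness preserving $T$ and $x,y\in E^+$ one has $(Tx)^+\wedge(Ty)^-=0$, and from this the map $u\mapsto |Tu|$ is additive on $E^+$ and extends by Kantorovich's lemma to the modulus. Your reduction is exactly this route, and your derivation of $|Tx|=|T|x||$ and of the final verification that the extension is $T\vee(-T)$ are fine. The problem is that the one step you yourself flag as ``the only genuinely hard step'' --- the additivity $|T(u+v)|=|Tu|+|Tv|$ on $E^+$, i.e.\ Meyer's lemma --- is not actually proved by your sketch.

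Concretely, two things go wrong with the proposed Kakutani--Krein localization. (i) The operator $T$ does not map into the principal ideal of $F$ generated by $|Tu|+|Tv|$, and the domain $E$ is an abstract Riesz space (not assumed Archimedean, with no functional representation in the hypotheses), so the assertion that ``disjointness preserving operators into $C(K)$ act pointwise as sign-weighted compositions'' does not even typecheck here; moreover, the known representation theorems of weighted-composition type for disjointness preserving operators are proved \emph{using} the existence of the modulus (or Meyer's lemma itself), so invoking them at this point is circular. (ii) Your argument for the key step uses only disjointness preservation and the Archimedean property of $F$, never the hypothesis $T\in\mathcal L_b(E,F)$. But order boundedness is essential: there exist disjointness preserving linear operators that are not order bounded (Abramovich--Veksler--Koldunov type examples), and for such $T$ the conclusion fails --- if $(Tu)^+\wedge(Tv)^-=0$ held for all $u,v\in E^+$, your own construction would produce a positive linear majorant of $\pm T$, forcing $T$ to be regular. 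So any proof of the lemma that does not use order boundedness proves a false statement; the argument in \cite{ABu} indeed combines order boundedness of $T$ with the Archimedean property of $F$ in an approximation argument (and that is precisely the ingredient with no analogue in the paper's OAO setting, where item~(4) of Theorem~\ref{th:kjhhfy7dy} only survives for laterally bounded pairs). As written, then, the proposal reproduces the easy part of the standard proof and leaves the genuinely hard part unproved.
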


We also prove an analogue of Meyer's lemma on disjointness preserving operators \cite[Lemma\,2.39]{ABu} for the setting of OAOs.

The proof of the next theorem, which is an analogue of Meyer's theorem for OAOs, based on the results of the previous section, is much more simpler than the proof of the original Meyer Theorem\,\ref{th:mayers} for linear operators. Although OAOs generalize linear operators, we cannot consider our theorem as a generalization of Meyer's theorem for linear operators, because spaces of linear operators and OAOs have different orders.

\begin{thm} \label{th:kjhhfy7dy}
Let $E, F$ be Riesz spaces and let $T \in \mathcal O (E,F)$ preserve disjointness. Then the following assertions hold.
\begin{enumerate}
  \item $T \in \mathcal{P} (E,F)$.
  \item $|T|$ exists in $\mathcal O (E,F)$ and $|T| \in \mathcal{P} (E,F)$. Moreover, for every $x \in E$ one has $
\bigl|T \bigr| (x) = \bigl| T (x) \bigr|.$
  \item $T$ is regular; $T^+$ and $T^-$ exist and both preserve disjointness. Moreover, for every $x \in E$ one has $T^+(x) = \bigl( T(x) \bigr)^+$ and $T^+(x) = \bigl( T(x) \bigr)^+.$
  \item Let $\{x,y\} \subset E$ be a laterally bounded set. Then $\bigl( T (x) \bigr)^+ \wedge \bigl( T (y) \bigr)^- = 0$.
\end{enumerate}
\end{thm}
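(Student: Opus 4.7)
The strategy is to bootstrap items (1)--(3) directly from the existence criteria developed in Section 3, exploiting the single key consequence of disjointness preservation: whenever $x = u \sqcup v$, one has $T(u) \perp T(v)$, and hence $|T(u)+T(v)| = |T(u)|+|T(v)|$, with analogous disjoint-sum formulas for positive and negative parts. For (1), fix $e \in E$ and any $u \in \mathfrak{F}_e$. Since $u \perp e-u$, disjointness preservation yields $T(u) \perp T(e-u)$, whence $|T(e)| = |T(u)| + |T(e-u)| \ge |T(u)|$. Thus $T(\mathfrak{F}_e) \subseteq [-|T(e)|, |T(e)|]$, proving $T \in \mathcal{P}(E,F)$.

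For (2), I would apply Corollary \ref{cor:hfhn4}. For any decomposition $x = u \sqcup v$, the disjointness $T(u) \perp T(v)$ gives $T(u) - T(v) \le |T(u)| + |T(v)| = |T(x)|$, so $|T(x)|$ is an upper bound of $\{T(u) - T(v) : x = u \sqcup v\}$. Taking $(u,v)=(x,0)$ and $(u,v)=(0,x)$ shows that any upper bound dominates both $T(x)$ and $-T(x)$, hence dominates $|T(x)|$; the supremum therefore exists and equals $|T(x)|$. An analogous argument via Corollaries \ref{cor:hfhn2} and \ref{cor:hfhn3} handles (3): for $u \sqsubseteq x$ the disjoint decomposition $T(x) = T(u) + T(x-u)$ gives $(T(x))^+ = (T(u))^+ + (T(x-u))^+ \ge T(u)$, so $(T(x))^+$ bounds $\{T(u) : u \sqsubseteq x\}$ from above, with equality attained at $u=x$ and $u=0$; hence $T^+(x) = (T(x))^+$, and symmetrically $T^-(x) = (T(x))^-$. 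Disjointness preservation of $T^\pm$ then follows immediately from $x \perp y \Rightarrow T(x) \perp T(y) \Rightarrow (T(x))^\pm \perp (T(y))^\pm$.

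For (4), let $e \in E$ be a common lateral upper bound of $x$ and $y$, so that $x, y \in \mathfrak{F}_e$. Set $z := x \pmb{\cap} y$, $x' := x - z$, $y' := y - z$; then $z \sqsubseteq x$ and $z \sqsubseteq y$ yield $x = z \sqcup x'$ and $y = z \sqcup y'$, while a short Boolean computation in $\mathfrak{F}_e$ gives $x' \pmb{\cap} y' = 0$, i.e. $x' \perp y'$. Disjointness preservation therefore produces three pairwise disjoint images $T(z), T(x'), T(y') \in F$. Consequently $T(x)^+ = T(z)^+ + T(x')^+$ and $T(y)^- = T(z)^- + T(y')^-$; each summand of the first sum is disjoint from each summand of the second (e.g.\ $T(z)^+ \perp T(z)^-$ trivially, $T(z)^+ \perp T(y')^-$ because $T(z) \perp T(y')$, and so on), and since the set of vectors disjoint from a fixed one is a band, hence closed under addition, we conclude $T(x)^+ \perp T(y)^-$.

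The mildly tricky step is item (4): the key move is to cut out the common fragment $z = x \pmb{\cap} y$ in order to reduce the problem to the disjointness relations among three (rather than two) pairwise disjoint images, after which standard band-closure arguments finish things. Everything else reduces almost mechanically to the machinery of Section 3.
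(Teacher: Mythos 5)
Your proof is correct, and items (1)--(3) essentially retrace the paper's route: everything is bootstrapped from Corollaries \ref{cor:hfhn2}--\ref{cor:hfhn4}, the only cosmetic difference being that the paper obtains the key bound via the lateral-order machinery (Lemma \ref{th:latbound} plus Lemma \ref{th:jgnbfgy}: $|T(u)| \sqcup |T(v)| \sqsubseteq |T(x)|$, hence $\le |T(x)|$), while you use the equivalent disjoint-sum identities $|T(u)+T(v)|=|T(u)|+|T(v)|$ and $(T(u)+T(v))^+=(T(u))^++(T(v))^+$ directly; for (3) the paper instead verifies by hand that $x\mapsto (T(x))^+$ is an OAO and that it equals $T\vee 0$, whereas you compute the supremum and invoke Corollary \ref{cor:hfhn2} --- both work. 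One wording slip in your (3): the supremum $(T(x))^+$ is \emph{not} attained at $u=x$ or $u=0$ in general; what you mean (and what you correctly wrote in (2)) is that any upper bound of $\{T(u): u\sqsubseteq x\}$ dominates $T(x)$ and $T(0)=0$, hence dominates $(T(x))^+$. The genuine divergence is item (4). The paper's argument is a two-liner: from $x,y \sqsubseteq e$, Lemma \ref{th:latbound} gives $T(x) \sqsubseteq T(e)$ and $T(y) \sqsubseteq T(e)$, Lemma \ref{th:jgnbfgy} then gives $(T(x))^+ \le (T(e))^+$ and $(T(y))^- \le (T(e))^-$, and $(T(e))^+ \wedge (T(e))^- = 0$ finishes. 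You instead cut out the common fragment $z = x \pmb{\cap} y$, reduce to three pairwise disjoint images $T(z), T(x'), T(y')$, and conclude by band closure under addition; this is valid and more self-contained (it avoids the fragment-monotonicity lemmas from \cite{MPP} at the cost of a Boolean decomposition), but note that the step ``$x' \pmb{\cap} y' = 0$, i.e.\ $x' \perp y'$'' silently uses that $x', y'$ are fragments of the same $e$, so that $x'^+ \wedge y'^- \le e^+ \wedge e^- = 0$ and the mixed infima vanish automatically; with that remark supplied, your argument is complete.
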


Remark that Theorem\,\ref{th:kjhhfy7dy} was partially proved in \cite[Lemma\,2.2]{PlPol} under the additional assumptions of Dedekind completeness of $F$, and formulas in (2) and (3) for the modulus, positive and negative parts were discovered earlier for an order bounded $T$ in \cite[Lemma\,3.1]{AP}. Item (4) is an analogue of Meyer's lemma (see Lemma\,\ref{th:mayers} below).

For the proof of Theorem\,\ref{th:kjhhfy7dy}, we need the following two known simple facts.

\begin{lemma}[\cite{MPP}] \label{th:latbound}
Let $E, F$ be Riesz spaces and $T \colon E \to F$ a disjointness preserving OAO. Then for every $e \in E$ and every $x \sqsubseteq e$ one has $T(x) \sqsubseteq T(e)$.
\end{lemma}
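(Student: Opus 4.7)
The plan is to unwind the definitions and use the two hypotheses on $T$ in succession. Writing $x \pmb{\sqsubseteq} e$ means by definition that $x \perp e - x$, i.e.\ $e = x \sqcup (e-x)$ is a decomposition into disjoint summands.

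First, I would apply the orthogonal additivity of $T$ to this decomposition, obtaining
$$
T(e) = T(x) + T(e-x), \qquad \text{hence} \qquad T(e) - T(x) = T(e-x).
$$
Next, since $x \perp (e-x)$ and $T$ preserves disjointness, we get $T(x) \perp T(e-x)$. Substituting the identity above yields $T(x) \perp T(e) - T(x)$, which is exactly the statement $T(x) \pmb{\sqsubseteq} T(e)$.

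There is no genuine obstacle here; the argument is a two-line check that orthogonal additivity converts the fragment relation on the domain into an additive decomposition of images, and disjointness preservation then transports the orthogonality to the range.
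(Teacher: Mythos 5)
Your argument is correct and is precisely the ``straightforward and easy'' two-line check that the paper alludes to (it cites \cite{MPP} and omits the details): orthogonal additivity applied to $e = x \sqcup (e-x)$ gives $T(e)-T(x)=T(e-x)$, and disjointness preservation turns $x\perp e-x$ into $T(x)\perp T(e)-T(x)$, i.e.\ $T(x)\sqsubseteq T(e)$. Nothing to add.
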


The proof is straightforward and easy. Note that the converse implication also holds: every lateral order preserving OAO preserves disjointness \cite[Theorem\,4.9]{MPP}.

\begin{lemma}\cite{MPP} \label{th:jgnbfgy}
Let $E$ be a Riesz spaces and $x,y \in E$.
\begin{enumerate}
  \item If $x \sqsubseteq y$ then $x^+ \sqsubseteq y^+$, $x^- \sqsubseteq y^-$, $|x| \sqsubseteq |y|$.
  \item If $|x| \sqsubseteq |y|$ then $|x| \le |y|$.
\end{enumerate}
\end{lemma}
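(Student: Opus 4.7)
My plan is to reduce everything to the standard Riesz-space identities for disjoint pairs: if $u \perp v$ in a Riesz space, then $u^\pm \perp v^\pm$, $|u| \perp |v|$, and consequently $(u+v)^+ = u^+ + v^+$, $(u+v)^- = u^- + v^-$, and $|u+v| = |u|+|v|$ (all of these are in \cite{ABu}). The two items of the lemma are then essentially bookkeeping with these identities together with the definition $x \sqsubseteq y \iff x \perp (y-x)$.

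For part (1), I would set $v := y - x$ and note that $x \perp v$ by hypothesis. Applying the disjoint-sum identities to $y = x + v$ yields
\[
 y^+ = x^+ + v^+, \qquad y^- = x^- + v^-, \qquad |y| = |x| + |v|.
\]
In particular $y^+ - x^+ = v^+$, $y^- - x^- = v^-$ and $|y| - |x| = |v|$. Since $x \perp v$ forces $x^+ \perp v^+$, $x^- \perp v^-$ and $|x| \perp |v|$, the definition of the lateral order gives immediately $x^+ \sqsubseteq y^+$, $x^- \sqsubseteq y^-$ and $|x| \sqsubseteq |y|$.

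For part (2), set $a := |x| \ge 0$ and $b := |y| \ge 0$, so the hypothesis reads $a \perp (b-a)$, i.e.\ $a \wedge |b-a| = 0$. I would decompose $|b-a| = (b-a)^+ + (a-b)^+$, so that $a \wedge (a-b)^+ = 0$. Because $a \ge 0$ and $b \ge 0$, the inequality $(a-b)^+ = (a-b) \vee 0 \le a \vee 0 = a$ holds, so $a \wedge (a-b)^+ = (a-b)^+$. Combining gives $(a-b)^+ = 0$, that is, $a \le b$, i.e.\ $|x| \le |y|$.

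There is no real obstacle here: both items are immediate consequences of standard facts about disjointness. The only place to be careful is in part (2), where one must avoid assuming $b - a \ge 0$ (which is exactly what one is trying to prove) and instead exploit the weaker consequence $a \wedge (a-b)^+ = 0$ of lateral fragmenthood. Everything else is routine manipulation of $u^+, u^-$ under disjointness.
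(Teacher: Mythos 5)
Your proof is correct. Note that the paper does not actually prove this lemma --- it is quoted from \cite{MPP} with a pointer to Proposition 3.1 there --- so there is no in-paper argument to compare against; your reduction of part (1) to the disjoint-sum identities $y^{\pm}=x^{\pm}+v^{\pm}$, $|y|=|x|+|v|$ for $v=y-x\perp x$, and your handling of part (2) via $a\wedge(a-b)^{+}=0$ together with $(a-b)^{+}\le a$ (correctly avoiding the circular assumption $b-a\ge 0$), constitute a complete and standard verification.
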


See \cite[Proposition\,3.1]{MPP} for details about Lemma\,\ref{th:jgnbfgy}.

\begin{proof}[Proof of Theorem\,\ref{th:kjhhfy7dy}]

(1) follows from Lemma\,\ref{th:latbound}.

(2). By Corollary\,\ref{cor:hfhn4}, for the existence of $|T|$ it is enough to prove that, for every $x \in E$ the supremum $\sup D_x$ exists in $F$, where $D_x := \{T(u) - T(v): \, x = u \sqcup v\}$. Fix any $x \in E$. Then for every $t \in D_x$, say, $t = T(u) - T(v)$, where $x = u \sqcup v$, since $T$ preserves disjointness, one has by Lemma\,\ref{th:latbound}
$$
|t| = |T(u)| \sqcup |T(v)| \stackrel{\tiny {\rm Lemma}\,\ref{th:jgnbfgy}}{\sqsubseteq} |T(x)|.
$$
By Lemma\,\ref{th:jgnbfgy}\,(2), $|t| \le |T(x)|$. On the other hand, $T(x) \in D_x$ and $-T(x) \in D_x$. By the above, $\sup D_x = |T(x)|$. By Corollary\,\ref{cor:hfhn4}, $|T|$ exists and $\bigl|T \bigr| (x) = \bigl| T (x) \bigr|$ for all $x \in E$. The fact that $|T| \in \mathcal{P} (E,F)$ easily follows from the formula for $|T|$, see Corollary\,\ref{cor:hfhn4}.

(3). Since $T(x) = \bigl( T(x) \bigr)^+ - \bigl( T(x) \bigr)^-$, to prove the regularity of $T$, it is enough to show that the functions $\phi, \psi \colon E \to F$ defined by $\phi(x) = \bigl( T(x) \bigr)^+$ and $\psi(x) = \bigl( T(x) \bigr)^-$ for all $x \in E$ are OAOs. Indeed, fix any $x,y \in E$ with $x \perp y$. Then
$$
\phi(x+y) = \bigl( T(x \sqcup y) \bigr)^+ = \bigl( T(x) \sqcup T(y) \bigr)^+ = \bigl( T(x) \bigr)^+ \sqcup \bigl( T(y) \bigr)^+ = \phi(x) \sqcup \phi(y).
$$
So, $\phi$ is a disjointness preserving OAO. Similarly, $\psi$ is. It is left to show that $\phi = T \vee 0$ and $\psi = (-T) \vee 0$. Obviously, $T \le \phi$ and $0 \le \phi$. Let $S \in \mathcal{O}(E,F)$ satisfy $T \le S$ and $0 \le S$. Then for every $x \in E$ one has $\phi(x) = \bigl( T(x) \bigr)^+ \vee 0 \le S(x)$, that is, $\phi \le S$ and so $T^+ = \phi$. Analogously, $T^- = \psi$.

(4) Let $\{x,y\}$ be laterally bounded by $e \in E$, that is, $x \sqsubseteq e$ and $y \sqsubseteq e$. By Lemma\,\ref{th:latbound}, $T(x) \sqsubseteq T(e)$ and $T(y) \sqsubseteq T(e)$. By Lemma\,\ref{th:jgnbfgy}, $\bigl( T (x) \bigr)^+ \sqsubseteq \bigl( T (e) \bigr)^+$ and $\bigl( T (y) \bigr)^- \sqsubseteq \bigl( T (e) \bigr)^-$ and therefore $\bigl( T (x) \bigr)^+ \le \bigl( T (e) \bigr)^+$ and $\bigl( T (y) \bigr)^- \le \bigl( T (e) \bigr)^-$. Taking into account that $\bigl( T (e) \bigr)^+ \wedge \bigl( T (e) \bigr)^- = 0$, we obtain that $\bigl( T (x) \bigr)^+ \wedge \bigl( T (y) \bigr)^- = 0$.
\end{proof}

Now we come back to the following Meyer lemma, which precedes Meyer's theorem for linear operators.

\begin{lemma}[Lemma\,2.39, \cite{ABu}] \label{le:mayers}
Let $E, F$ be Riesz spaces with $F$ Archimedean. Then for every disjointness preserving linear operator $T \in \mathcal L_b (E,F)$ and every $x,y \in E^+$ one has $(Tx)^+ \wedge (Ty)^- = 0$.
\end{lemma}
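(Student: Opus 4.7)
The plan is to adapt Meyer's classical argument \cite[Lemma 2.39]{ABu}, exploiting the linearity of $T$ together with the Archimedean hypothesis on $F$; note that Theorem~\ref{th:kjhhfy7dy}(4) cannot be invoked directly, since arbitrary $x,y \in E^+$ need not be laterally bounded in $E$.

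First I would decompose $x-y = (x-y)^+ - (y-x)^+$ into disjoint positive parts. By linearity, $Tx - Ty = u - v$, where $u := T((x-y)^+)$ and $v := T((y-x)^+)$, and disjointness preservation yields $u \perp v$. Consequently the four elements $u^+, u^-, v^+, v^-$ are pairwise disjoint, so $(Tx-Ty)^+ = u^+ + v^-$. Combining the Riesz inequality $(Tx)^+ \le (Ty)^+ + (Tx-Ty)^+$ with $(Ty)^+ \wedge (Ty)^- = 0$ and subdistributivity of $\wedge$ over $+$ produces the key estimate $(Tx)^+ \wedge (Ty)^- \le u^+ \wedge (Ty)^- + v^- \wedge (Ty)^-$.

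Next I would set up an Archimedean argument by scaling. For each $n \in \mathbb{N}$, applying the same reasoning to $nx$ and $y$ produces $u_n := T((nx-y)^+)$ and $v_n := T((y-nx)^+)$ with $u_n \perp v_n$. Crucially $(y-nx)^+ \le y$, so order-boundedness of $T$ supplies a uniform bound $|v_n| \le M_y \in F^+$ independent of $n$. Careful bookkeeping with the identity $(nTx - Ty)^+ = u_n^+ + v_n^-$ should then yield $n \cdot \bigl((Tx)^+ \wedge (Ty)^-\bigr) \le B$ for some fixed $B \in F^+$ and every $n \in \mathbb{N}$, whereupon the Archimedean property of $F$ forces $(Tx)^+ \wedge (Ty)^- = 0$.

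The main obstacle will be controlling the mixed term $u_n^+ \wedge (Ty)^-$ uniformly in $n$: unlike $v_n$, the element $u_n$ lives in $T([0, nx])$, whose order bound typically grows with $n$, so $u_n^+$ is not itself uniformly bounded. The classical device is to exploit $u_n \perp v_n$ --- which forces $u_n^+$ to be disjoint from both $v_n^+$ and $v_n^-$ --- together with the identity $(nTx - Ty)^+ = u_n^+ + v_n^-$, so that the $n$-dependence of $u_n^+$ transfers onto the uniformly bounded $v_n^-$, after which the estimate closes.
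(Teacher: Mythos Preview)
The paper does not give a proof of Lemma~\ref{le:mayers} at all: the lemma is quoted verbatim from \cite[Lemma~2.39]{ABu} purely for context, in order to contrast the linear case with the OAO setting, where the paper immediately exhibits counterexamples showing the statement fails. So there is no ``paper's own proof'' to compare your attempt against; you are reconstructing the proof from the cited source, not from this paper.

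Your outline follows the classical Meyer strategy from \cite{ABu}: decompose $nx-y$ into its disjoint positive and negative parts, push through $T$, use order boundedness on $[0,y]$ to bound $v_n$ uniformly, and finish with the Archimedean property. That is the right shape. One genuine warning about your bookkeeping step: replacing $x$ by $nx$ in your key estimate bounds $n(Tx)^+\wedge(Ty)^-$, and this quantity is \emph{not} $n\bigl((Tx)^+\wedge(Ty)^-\bigr)$ in general (in $\mathbb R$ take $(Tx)^+=(Ty)^-=1$), so the sentence ``should then yield $n\cdot\bigl((Tx)^+\wedge(Ty)^-\bigr)\le B$'' is not justified by what precedes it. The classical argument in \cite{ABu} handles this by also splitting $nx=(nx\wedge y)+(nx-y)^+$ and $y=(nx\wedge y)+(y-nx)^+$, so that $nTx$ and $Ty$ share the common bounded summand $p_n=T(nx\wedge y)$; exploiting this shared piece, together with $u_n\perp v_n$, is what makes the Archimedean step close. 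Your ``transfer'' paragraph gestures at the disjointness but does not supply this missing ingredient.
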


The above Meyer's lemma is not longer true for OAOs due to the following simple example.

\begin{example}
Define a function $T \colon C[0,1] \to C[0,1]$ by setting $\mathbf{1} = \mathbf{1}_{[0,1]}$ and
$$
T(x) = \left\{
         \begin{array}{ll}
           \mathbf{1}, & \hbox{if} \,\, x = \mathbf{1}, \\
           - \mathbf{1}, & \hbox{if} \,\, x = 2 \cdot \mathbf{1}, \\
           0, & \hbox{if} \,\, x \in C[0,1] \setminus \{\mathbf{1}, 2 \cdot \mathbf{1}\}.
         \end{array}
       \right.
$$
Then $T$ is a disjointness preserving OAO and $\bigl( T (\mathbf{1}) \bigr)^+ \wedge \bigl( T (2 \cdot \mathbf{1}) \bigr)^- = \mathbf{1} \neq 0$.
\end{example}

There are also examples of the kind of OAOs acting between Dedekind complete Riesz spaces, however somewhat involved. For instance, define a map $S \colon L_p \to L_p$, $0 \le p \le \infty$ by setting $S(x) = x \pmb{\cap} \mathbf{1} - x \pmb{\cap} (2 \cdot \mathbf{1})$. By \cite[Theorem\,4]{KKP}, $S$ is an OAO. One can easily show that $|Sx| \le |x|$ for all $x \in L_p$, and hence $S$ preserves disjointness. Moreover, $\bigl( S (\mathbf{1}) \bigr)^+ \wedge \bigl( S (2 \cdot \mathbf{1}) \bigr)^- = \mathbf{1} \wedge (2 \cdot \mathbf{1}) = \mathbf{1} \neq 0$.

Such examples exist, because an OAO may have independent behavior on collinear vectors (see Proposition\,4.14 and Theorem\,6.1 of \cite{Po} for examples of OAOs with independent behavior on fragments of different elements of a Riesz space). So Meyer's lemma for OAOs is true for laterally bounded elements only, see Theorem\,\ref{th:kjhhfy7dy}\,(4).

\section{Remarks and open problems}

We do not know whether the sufficient condition in Theorem\,\ref{th:veekfuy} for the existence of $S \vee T$ is necessary for the case where the range space is not Dedekind complete.

\begin{prob} \label{pr:kkhdh}
Let $E, F$ be Riesz spaces and $S,T \in \mathcal{O}(E,F)$. Assume that $S \vee T$ exists in $\mathcal{O}(E,F)$. Does for every $x \in E$ there exists $\sup \{S(u) + T(v): \, x = u \sqcup v\}$ in $F$?
\end{prob}

Let $S \vee T$ exist in $\mathcal{O}(E,F)$, and let $\widehat{F}$ be a Dedekind completion of $F$ so that $F$ is an order dense ideal of $\widehat{F}$. Let $J \colon F \to \widehat{F}$ be the inclusion embedding. For every $P \in \mathcal{O}(E,F)$ we define an operator $\widehat{P} \in \mathcal{O}(E,\widehat{F})$ by setting $\widehat{P} = J \circ P$. Then the set $\{\widehat{S}(u) + \widehat{T}(v): \, x = u \sqcup v\}$ is order bounded in $F$ by $(S \vee T)(x)$ and by the Dedekind completeness of $\widehat{F}$, $R_1(x) := \sup \{\widehat{S}(u) + \widehat{T}(v): \, x = u \sqcup v\}$ exists in $\widehat{F}$. Then by Theorem\,\ref{th:veekfuy}, $\widehat{S} \vee \widehat{T}$ exists in $\mathcal{O}(E,\widehat{F})$ and equals $R_1$. Moreover, it is easily seen that $R_1 \le \widehat{S \vee T}$. One can show that, if $R_1 = \widehat{S \vee T}$ then $\sup \{S(u) + T(v): \, x = u \sqcup v\}$ exists in $F$ for all $x \in E$, and the answer to Problem\,\ref{pr:kkhdh} is affirmative.

\begin{prob}
Is $R_1 = \widehat{S \vee T}$ true in all cases?
\end{prob}

\section{Declarations}

\subsection{Funding}

The second-named author was supported by Weizmann Institute of Science (Israel) Emergency program for scientists affected by the war in Ukraine.

The third-named author was supported by the Ministry of Education of Ukraine Grant no 0122U000857.

\subsection{Availability of data and materials}

All items of the bibliography are available in accordance with publishing agreements.

\end{document}